\newtheorem{theorem}{Theorem}
\theoremstyle{plain}
\newtheorem{lemma}{Lemma}
\numberwithin{equation}{section}
\begin{document}
\title[Uniform Parametrization ]{Uniform Parametrization in Pseudo-Complex
Hyperbolic $%
%TCIMACRO{\U{2102} }%
%BeginExpansion
\mathbb{C}
%EndExpansion
^{n}$ Space}
\author{Minh Q. Truong}
\address{{\small Department of Biological and Physical Sciences }\\
St. Louis, MO 63105}
\email{mtruong@fontbonne.edu}
\subjclass{}
\keywords{Uniformization, Tangent Bundle, Pseudo-Hyperbolic Space}

\begin{abstract}
The parametrization theorem is derived in a flat $nD$ pseudo-complex affine
space. The pseudo-complex hyperbolic space accomodates $n$-number of
uncompactified time-like extra dimensions with sugnature $\left( s,r\right) $%
, where $s$ and $r$ are the numbers of minus and plus signs associated with
the diagonalized metric matrix. The main result of the theorem suggests a
uniform parametrization for both time-like and space-like dimensions. The
uniformization requirement preserves complex-hyperbolic inner product
associated with the space. As application, the elements of the space is
shown to be invariant under linear transformation.
\end{abstract}

\maketitle

\section{Introduction}

The standard approach in dealing with higher dimensional theories is to
consider almost exclusively space-like extra dimensions\cite{Antoniadis}.
Large extra-dimensions have been used to address the hierarchy problem,
whereas Higgs mass is proven to be finite\cite{Higgsmass}.The effects of the
extra space-like dimensions have been examined in the context of 4D
superspace formalism\cite{Truong}. However, there is no priory reason why
extra time-like dimensions cannot exist. Time-like extra dimensions have
been ignored due to serious conflicts with causality and unitarity\cite%
{Yndurain}\cite{Senjanovic}\cite{Erdem}. Time-like extra dimensions have
been used within the framework of brane world models, as an alternative in
reconciling the mass hierarchy problem\cite{Chaichian}. It has been shown
that extra time-like dimensional theories can yield tachyons-free modes\cite%
{Quiros1}\cite{Quiros2}. The main result of this paper is a generalization
of some of the results obtained in\cite{Truong2}, specifically, the
constraint of the time-like dimensions. The complex nature of the
pseudo-complex hyperbolic affine $%
%TCIMACRO{\U{2102} }%
%BeginExpansion
\mathbb{C}
%EndExpansion
^{n}$ space and metric function lead to the uniform parametrization theorem.

In the next section, the hyperbolic parametrization of elements, $%
p^{A}\left( \psi \right) \in M\subset $ $%
%TCIMACRO{\U{2102} }%
%BeginExpansion
\mathbb{C}
%EndExpansion
^{n}$ space and $\overset{\cdot }{p}_{A}\left( \psi \right) \in W\subset
T_{p^{A}\left( \psi \right) }\left( M\right) $, where $M$ and $W$ are some
open subsets of $%
%TCIMACRO{\U{211d} }%
%BeginExpansion
\mathbb{R}
%EndExpansion
^{s,r}$ space and its associated tangent space $T_{p^{A}\left( \psi \right)
}\left( M\right) $, are derived and contained in the lemma. The results from
the lemma are used in one of the inductive steps associated with the
uniformization theorem.

\pagebreak

\subsection{Mathematics}

This section serves as a review on higher dimensional manifold\cite{Oneille}%
. Consider a smooth $n$-dimensional differential manifold $M$ $\ $equipped
with an atlas of \ charts $\left( U_{A},p_{A}\right) $ and%
\begin{equation}
p_{A}:U_{A}\rightarrow 
%TCIMACRO{\U{211d} }%
%BeginExpansion
\mathbb{R}
%EndExpansion
^{n},
\end{equation}%
where $U_{A}\subset M$ is an open contractible subset. Equivalently, a point 
$p^{A}$ in $M$ is be parametrized by $\psi \in 
%TCIMACRO{\U{211d} }%
%BeginExpansion
\mathbb{R}
%EndExpansion
$%
\begin{equation}
p^{A}:%
%TCIMACRO{\U{211d} }%
%BeginExpansion
\mathbb{R}
%EndExpansion
\rightarrow M.
\end{equation}%
Since $U_{A}$ is an open contractible subset of $M,$ then there is a
restriction on the diffeomorphism from $T(U_{A})$ to $U_{A}\times 
%TCIMACRO{\U{211d} }%
%BeginExpansion
\mathbb{R}
%EndExpansion
^{n}$ to just linear isomorphism from $T_{p^{A}\left( \psi \right) }\left(
U_{A}\right) $ to $\left\{ p^{A}\left( \psi \right) \right\} \times 
%TCIMACRO{\U{211d} }%
%BeginExpansion
\mathbb{R}
%EndExpansion
^{n}.$ The tangent bundle $T\left( M\right) $ of $M$ defined as the disjoint
union of the tangent spaces of $M$ 
\begin{equation}
T\left( M\right) =\coprod\limits_{p^{A}\left( \psi \right) \in
M}T_{p^{A}\left( \psi \right) }\left( M\right) =\bigcup\limits_{p^{A}\left(
\psi \right) \in M}\left\{ p^{A}\left( \psi \right) \right\} \times
T_{p^{A}\left( \psi \right) }\left( M\right) ,
\end{equation}%
where $T_{p^{A}\left( \psi \right) }\left( M\right) $ denotes the tangent
space to $M$ at the point $p^{A}\left( \psi \right) .$ By inspection, the
dimensionality of the tangent bundle $T\left( M\right) $ is twice the
dimension of the differential manifold $M.$ The primary purpose of the
tangent bundle $T\left( M\right) $ is to provide a domain and range for the
derivative of a smooth function, i.e.%
\begin{equation}
f:M\longrightarrow W,
\end{equation}%
where $M$ and $W$ are some smooth differential manifolds. The derivative of
the function $f,$ $D$ $f$ is also a smooth function%
\begin{equation}
Df:T\left( M\right) \rightarrow T\left( W\right) .
\end{equation}%
Elements of $\ T\left( M\right) $ are pairs of the forms $\left( p^{A}\left(
\psi \right) ,\overset{\cdot }{p}^{A}\left( \psi \right) \right) ,$ where 
\begin{equation}
\overset{\cdot }{p}^{A}\left( \psi \right) =\frac{d\overset{\cdot }{p}%
^{A}\left( \psi \right) }{d\psi }.
\end{equation}%
The elements or points on a smooth differential manifold $M$ can be obtained
via a natural projection map $\pi $ defined by%
\begin{equation}
\pi :T\left( M\right) \rightarrow M,
\end{equation}%
i.e. 
\begin{equation}
\pi \left( p^{A}\left( \psi \right) ,\overset{\cdot }{p}^{B}\left( \psi
\right) \right) =p^{A}\left( \psi \right) .
\end{equation}%
The mapping 
\begin{equation}
\widetilde{p}_{\widetilde{A}}:\pi ^{-1}\left( U_{A}\right) \rightarrow 
%TCIMACRO{\U{211d} }%
%BeginExpansion
\mathbb{R}
%EndExpansion
^{2n},
\end{equation}%
defined by 
\begin{equation}
\widetilde{p}_{\widetilde{A}}\left( p^{A}\left( \psi \right) ,v^{B}\partial
_{B}\right) =\left( p^{A}\left( \psi \right) ,v^{B}\right) ,
\end{equation}%
where $\widetilde{A}=1,...,2n$ and $A,B=1,...,n.$ The tangent bundle of $M$
is itself a manifold of dimensionality of $2n,$ and provides the domain and
range through the derivative map%
\begin{equation}
Df:T\left( M\right) \rightarrow T\left( W\right) .
\end{equation}%
The higher-order tangent bundle can be recursively defined by the relation 
\begin{equation}
T^{p}\left( M\right) =T\left( T^{p-1}\left( M\right) \right) ,
\label{recursive}
\end{equation}%
where $T^{p}$ is $p$-order tangent bundle, provides the domain and range
through the $p$-derivative map%
\begin{equation}
D^{p}f:T^{p}\left( M\right) \rightarrow T^{p}\left( W\right) .
\end{equation}%
The dimensionality of higher-order tangent bundle can be obtained via
equation $\left( \ref{recursive}\right) .$ The second-order tangent bundle%
\begin{equation}
T\left( T\left( M\right) \right) =\bigcup\limits_{p^{A}\left( \psi \right)
\in T\left( M\right) }\left\{ p^{A}\left( \psi \right) \right\} \times
T_{p^{A}\left( \psi \right) }\left( T\left( M\right) \right) ,
\end{equation}%
has dimensionality of $4n.$ It is straight forward to see that the $p$-order
tangent bundle 
\begin{equation}
T^{p}\left( M\right) =T\left( T^{p-1}\left( M\right) \right)
=\bigcup\limits_{p^{A}\left( \psi \right) \in T^{p-1}\left( M\right)
}\left\{ p^{A}\left( \psi \right) \right\} \times T_{p^{A}\left( \psi
\right) }\left( T^{p-1}\left( M\right) \right) ,
\end{equation}%
has dimensionality of $2^{p}\cdot n.$

\begin{lemma}
Show that the hyperbolic parametrized forms of $p_{A,l+1-s}\left( \psi
\right) $ and $\overset{\cdot }{p}_{A,l+1-s}\left( \psi \right) $ are given
by%
\begin{eqnarray}
p_{A,l+1-s}\left( \psi \right) &=&\sqrt{\frac{r}{s}}R_{eff}\tsum%
\nolimits_{i=1}^{s}\sinh \left( \sqrt{sr}\psi \right) \widehat{t}_{i} \\
&&+R_{eff}\tsum\nolimits_{j=s+1}^{k+1}\cosh \left( \sqrt{sr}\psi \right) 
\widehat{x}_{j},  \notag
\end{eqnarray}%
and%
\begin{eqnarray}
\overset{\cdot }{p}_{A,l+1-s}\left( \psi \right)
&=&rR_{eff}\tsum\nolimits_{i=1}^{s}\cosh \left( \sqrt{sr}\psi \right) 
\widehat{t}_{i} \\
&&+\sqrt{sr}R_{eff}\tsum\nolimits_{j=s+1}^{k+1}\sinh \left( \sqrt{sr}\psi
\right) \widehat{x}_{j},  \notag
\end{eqnarray}%
subjected to the following initial condition%
\begin{equation}
p_{A,n}\left( 0\right) =\left( t_{i}\left( 0\right) ,x_{j}\left( 0\right)
\right) =\left( \underset{s}{\underbrace{0,...,0}},\underset{k+1-s}{%
\underbrace{R_{eff},...,R_{eff}}}\right) ,  \label{initial condition}
\end{equation}%
where $i\in \left\{ 1,...,s\right\} $ and $j\in \left\{ s+1,...,k+1\right\}
. $
\end{lemma}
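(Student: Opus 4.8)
The plan is to realize the uniform parametrization as the orbit of a single one-parameter hyperbolic boost in $O(s,r)$ that couples every time-like direction to every space-like direction with equal weight. Writing the indefinite inner product of $\mathbb{R}^{s,r}$ as $\langle p,q\rangle=-\sum_{i=1}^{s}p_{t_i}q_{t_i}+\sum_{j=s+1}^{k+1}p_{x_j}q_{x_j}$, I would first record the infinitesimal form of this uniform boost as the first-order system
\begin{equation}
\dot{t}_{i}=\sum_{j=s+1}^{k+1}x_{j},\qquad \dot{x}_{j}=\sum_{i=1}^{s}t_{i},
\end{equation}
which is precisely the symmetric off-diagonal generator acting identically on each coordinate pair. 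Taking this system as the definition of the flow is the conceptual heart of the argument: it is what the word \emph{uniform} encodes, and it is also what guarantees that $\dot{p}_{A,l+1-s}$ lands in the tangent space $T_{p^{A}(\psi)}(M)$ of the earlier setup.

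Next I would exploit the permutation symmetry of the initial data in (\ref{initial condition}). Since all time components vanish and all space components equal $R_{eff}$ at $\psi=0$, and since the uniform flow is invariant under permuting the time indices among themselves and the space indices among themselves, the solution must keep all $t_i$ equal to a common function $t(\psi)$ and all $x_j$ equal to a common $x(\psi)$. The system then collapses to the two-dimensional reduced system $\dot{t}=r\,x$ and $\dot{x}=s\,t$, from which $\ddot{t}=sr\,t$ and $\ddot{x}=sr\,x$. The positive coefficient $sr$, the product of the counts of minus and plus signs, forces hyperbolic rather than trigonometric solutions and fixes the frequency $\sqrt{sr}$ appearing throughout the statement.

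I would then fix the constants. Imposing $t(0)=0$ and $x(0)=R_{eff}$ gives $t(\psi)=A\sinh(\sqrt{sr}\,\psi)$ and $x(\psi)=R_{eff}\cosh(\sqrt{sr}\,\psi)$; substituting into the first-order relation $\dot{t}=r\,x$ at general $\psi$ yields $A\sqrt{sr}=rR_{eff}$, hence $A=\sqrt{r/s}\,R_{eff}$. Summing the common values over the $s$ time directions $\widehat{t}_i$ and the $r=k+1-s$ space directions $\widehat{x}_j$ reproduces the claimed form of $p_{A,l+1-s}(\psi)$. Differentiating in $\psi$, equivalently reading off $\dot{t}=rR_{eff}\cosh(\sqrt{sr}\,\psi)$ and $\dot{x}=\sqrt{sr}\,R_{eff}\sinh(\sqrt{sr}\,\psi)$ from the reduced system, produces the stated $\dot{p}_{A,l+1-s}(\psi)$, the identity $\sqrt{r/s}\cdot\sqrt{sr}=r$ being what aligns the amplitudes.

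As a consistency check I would evaluate $\langle p_{A,l+1-s},p_{A,l+1-s}\rangle=-s\cdot\frac{r}{s}R_{eff}^{2}\sinh^{2}(\sqrt{sr}\,\psi)+r R_{eff}^{2}\cosh^{2}(\sqrt{sr}\,\psi)=rR_{eff}^{2}$, which is independent of $\psi$; thus the flow preserves the complex-hyperbolic inner product, as the uniformization requirement demands. I expect the main obstacle to be the justification, rather than the mere positing, of the uniform boost as the correct generating flow, together with a clean argument that the symmetry reduction to $t(\psi)$ and $x(\psi)$ loses no solutions; once the reduced two-dimensional system is established, everything else is routine differentiation. The relabeling to the index $l+1-s$ is then absorbed into the surrounding induction, where this closed form supplies the datum for the relevant inductive step.
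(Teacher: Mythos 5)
Your proposal is correct and follows essentially the same route as the paper: both take the coupled system $\dot{t}_{i}=\sum_{j=s+1}^{k+1}x_{j}$, $\dot{x}_{j}=\sum_{i=1}^{s}t_{i}$ as the defining flow, reduce it (via the equality of like components forced by the symmetric initial data) to $\ddot{x}_{j}=sr\,x_{j}$, and fix the amplitudes $\sqrt{r/s}\,R_{eff}$ and $R_{eff}$ from the initial condition $t_{i}(0)=0$, $x_{j}(0)=R_{eff}$. Your explicit permutation-symmetry justification of the collapse to a single pair $(t(\psi),x(\psi))$ and the closing inner-product invariance check are points the paper leaves implicit, but the computation is the same.
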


\begin{proof}
The $nD$ flat pseudo-complex affine hyperbolic space $H^{s,r}$ is a subset
of $%
%TCIMACRO{\U{2102} }%
%BeginExpansion
\mathbb{C}
%EndExpansion
^{n},$ and is defined as%
\begin{equation}
H^{s,r}=\left\{ \left( t_{i},x_{j}\right)
:-\tsum\nolimits_{i=1}^{s}t_{i}^{2}+\tsum%
\nolimits_{j=s+1}^{k+1}x_{j}^{2}=R^{2}\right\} 
\end{equation}%
where $R_{eff}=f(r)R,$ $f(r)$ is a scaling function of $r,$ $R_{eff}$ is the
effective positive constant curvature of $H^{s,r}$ affine space. For the
case of one extra time-like dimension, the space defined in\cite{Truong2}can
be shown to be equivalent to a $5D$ $AdS$ space. The hyperbolic parametrized 
$p_{A,l+1-s}\left( \psi \right) $ and $\overset{\cdot }{p}_{A,l+1-s}\left(
\psi \right) $ can be obtained by solving the following systems of
differential equations%
\begin{eqnarray}
\overset{\cdot }{x}_{j} &=&\tsum\nolimits_{i=1}^{s}t_{i},\text{ \ }
\label{constraint 3} \\
\overset{\cdot }{t}_{i} &=&\tsum\nolimits_{j=s+1}^{k+1}x_{j}  \notag \\
\text{ \ \ \ \ }p\circledast \overset{\cdot }{p} &=&-\tsum%
\nolimits_{i=1}^{s}t_{i}\overset{\cdot }{t}_{i}+\tsum%
\nolimits_{j=s+1}^{k+1}x_{j}\overset{\cdot }{x}_{j}  \label{constraint 4} \\
p\circledast \overset{\cdot }{p} &=&-\tsum\nolimits_{j=s+1}^{k+1}\tsum%
\nolimits_{i=1}^{s}t_{i}x_{j}+\tsum\nolimits_{j=s+1}^{k+1}\tsum%
\nolimits_{i=1}^{s}x_{j}t_{i}=0  \notag
\end{eqnarray}%
By inspection, the systems of differential equations satisfy the following
constrained relations: 
\begin{eqnarray}
p_{A,l+1-s}\left( \psi \right)  &=&\left( t_{i},x_{j}\right) , \\
p_{A,l+1-s}\left( \psi \right) \circledast p_{A,l+1-s}\left( \psi \right) 
&=&-\tsum\nolimits_{i=1}^{s}t_{i}^{2}+\tsum%
\nolimits_{j=s+1}^{k+1}x_{j}^{2}=R^{2},  \notag \\
p_{A,l+1-s}\left( \psi \right) \circledast \overset{\cdot }{p}%
_{A,l+1-s}\left( \psi \right) 
&=&-\tsum\nolimits_{i=1}^{s}\tsum\nolimits_{i^{\prime }=1}^{s}t_{i}\overset{%
\cdot }{t}_{i^{\prime }}\delta _{j,j^{\prime }}  \notag \\
&&+\tsum\nolimits_{j=s+1}^{k+1}\tsum\nolimits_{j^{\prime }=s+1}^{k+1}x_{j}%
\overset{\cdot }{x}_{j^{\prime }}\delta _{j,j^{\prime }} \\
p_{A,l+1-s}\left( \psi \right) \circledast \overset{\cdot }{p}%
_{A,l+1-s}\left( \psi \right)  &=&-\tsum\nolimits_{i=1}^{s}t^{i}\overset{%
\cdot }{t}_{i}+\tsum\nolimits_{j=s+1}^{k+1}x^{j}\overset{\cdot }{x}_{j}=0, 
\notag
\end{eqnarray}%
where $i\in \left\{ 1,...,s\right\} $ and $j\in \left\{ s+1,...,k+1\right\} .
$ The vectors $p^{A}$ and $\overset{\cdot }{p}_{A}$are said to be $H^{s,r}$
perpendiculars w.r.t. each other. By inspection, the hyperbolic
parametrization for elements of $p^{A}$ and $\overset{\cdot }{p}_{A}$can be
obtained by solving the following systems of differential equations,%
\begin{eqnarray}
\overset{\cdot }{x}_{j}\left( \psi \right)  &=&t_{1}\left( \psi \right)
+\cdot \cdot \cdot +t_{s}\left( \psi \right) =\sum_{i=1}^{s}t_{i}  \notag \\
\overset{\cdot }{x}_{j}\left( \psi \right)  &=&\sum_{i=1}^{s}t_{i}\left(
\psi \right) ,  \label{constraint 5}
\end{eqnarray}%
and%
\begin{equation}
\overset{\cdot }{t}_{i}\left( \psi \right)
=\tsum\nolimits_{j=s+1}^{k+1}x_{j}\left( \psi \right) .  \label{constraint 6}
\end{equation}%
Taking the derivative of equation $\left( \ref{constraint 5}\right) $ and
using the constrained equation $\left( \ref{constraint 6}\right) ,$we have 
\begin{eqnarray}
\overset{\cdot \cdot }{x}_{j}\left( \psi \right)  &=&\tsum\nolimits_{i=1}^{s}%
\overset{\cdot }{t}_{i}\left( \psi \right)   \notag \\
\overset{\cdot \cdot }{x}_{j}\left( \psi \right) 
&=&\tsum\nolimits_{i=1}^{s}\left( \tsum\nolimits_{j=s+1}^{k+1}x_{j}\left(
\psi \right) \right)   \notag \\
\overset{\cdot \cdot }{x}_{j}\left( \psi \right)  &=&s\left(
\tsum\nolimits_{j=s+1}^{k+1}x_{j}\left( \psi \right) \right)   \notag \\
\overset{\cdot \cdot }{x}_{j}\left( \psi \right)  &=&srx_{j}\left( \psi
\right)   \notag \\
\overset{\cdot \cdot }{x}_{j}\left( \psi \right) -srx_{j}\left( \psi \right)
&=&0.  \label{2nd order constrained dfe1.1.2}
\end{eqnarray}%
The solution to equation $\left( \ref{2nd order constrained dfe1.1.2}\right) 
$ takes a general form of 
\begin{equation}
x_{j}\left( \psi \right) =A_{j}e^{\sqrt{sr}\psi }+B_{j}e^{-\sqrt{sr}\psi }%
\text{ \ \ \ \ \ }\forall j\in \left\{ s+1,...k+1\right\} ,  \label{Sol 3}
\end{equation}%
where $A_{j}$ and $B_{j}$ are arbitrary constants and will be determined by
the initial condition. To determine the constants, take the derivative of
equation $\left( \ref{Sol 3}\right) $, and substituting in for equation $%
\left( \ref{constraint 5}\right) $ 
\begin{equation}
\overset{\cdot }{x}_{j}\left( \psi \right) =\sqrt{sr}A_{j}e^{\sqrt{sr}\psi }-%
\sqrt{sr}B_{j}e^{-\sqrt{sr}\psi }=\tsum\nolimits_{i=1}^{s}t_{i}\left( \psi
\right) .  \label{1st order dfe 3}
\end{equation}%
Imposing the temporal initial condition $\left( \ref{initial condition}%
\right) $to equation $\left( \ref{1st order dfe 3}\right) ,$ yields%
\begin{eqnarray}
\overset{\cdot }{x}_{j}\left( 0\right)  &=&\sqrt{sr}A_{j}e^{\sqrt{sr}\cdot
0}-\sqrt{sr}B_{j}e^{-\sqrt{sr}\cdot 0}  \notag \\
&=&\tsum\nolimits_{i=1}^{s}t_{i}\left( 0\right) =0\text{ \ \ \ }  \notag \\
&\Rightarrow &\text{ }A_{j}-B_{j}=0\text{ \ }\Rightarrow \text{ }A_{j}=B_{j}%
\text{ \ \ \ \ \ }\forall j\in \left\{ s+1,...k+1\right\} \text{\ }.
\end{eqnarray}%
The spatial solution $\left( \ref{Sol 3}\right) $ becomes%
\begin{equation}
x_{j}\left( \psi \right) =A_{j}\left( e^{\sqrt{sr}\psi }+e^{-\sqrt{sr}\psi
}\right) .  \label{Sol 3.1}
\end{equation}%
Imposing the spatial initial condition on equation $\left( \ref{Sol 3.1}%
\right) $ 
\begin{eqnarray}
x_{j}\left( 0\right)  &=&A_{j}\left( e^{\sqrt{sr}\cdot 0}+e^{-\sqrt{sr}\cdot
0}\right) =R_{eff}  \notag \\
&\Rightarrow &\text{ }2A_{j}=R_{eff}\text{ }\Rightarrow \text{ }A_{j}=\frac{%
R_{eff}}{2}\text{ \ \ }\forall j\in \left\{ s+1,...k+1\right\} .
\end{eqnarray}%
The particular spatial solution yields 
\begin{eqnarray}
x_{j}\left( \psi \right)  &=&R_{eff}\left( \frac{e^{\sqrt{sr}\psi }+e^{-%
\sqrt{sr}\psi }}{2}\right)   \notag \\
x_{j}\left( \psi \right)  &=&R_{eff}\cosh \left( \sqrt{sr}\psi \right) .%
\text{ \ }  \label{spatial sol 1.1.1}
\end{eqnarray}%
Using the spatial solution $\left( \ref{spatial sol 1.1.1}\right) ,$ the $s$%
-number of temporal solutions contained in equation $\left( \ref{constraint
6}\right) $ can now be obtained from%
\begin{eqnarray}
\overset{\cdot }{t}_{i}\left( \psi \right)  &=&\left( k+1-s\right)
x_{j}\left( \psi \right)   \notag \\
\overset{\cdot }{t}_{i}\left( \psi \right)  &=&rx_{j}\left( \psi \right)
=x_{j}\left( \psi \right) =rR_{eff}\cosh \left( \sqrt{sr}\psi \right)  
\notag \\
\overset{\cdot }{t}_{i}\left( \psi \right)  &=&rR_{eff}\cosh \left( \sqrt{sr}%
\psi \right) ,
\end{eqnarray}%
where $r=\left( k+1-s\right) .$ For each $i=1,...,s$, we have the following
system of differential equations%
\begin{eqnarray}
\frac{dt_{1}\left( \psi \right) }{d\psi } &=&rR_{eff}\cosh \left( \sqrt{sr}%
\psi \right) \text{ \ }\Rightarrow \text{ \ }t_{1}\left( \psi \right) =\frac{%
rR_{eff}}{\sqrt{sr}}\sinh \left( \sqrt{sr}\psi \right) +C_{1}, \\
\frac{dt_{2}\left( \psi \right) }{d\psi } &=&rR_{eff}\cosh \left( \sqrt{sr}%
\psi \right) \text{ \ }\Rightarrow \text{ \ }t_{2}\left( \psi \right) =\frac{%
rR_{eff}}{\sqrt{sr}}\sinh \left( \sqrt{s}\psi \right) +C_{2}, \\
\frac{dt_{s}\left( \psi \right) }{d\psi } &=&rR_{eff}\cosh \left( \sqrt{sr}%
\psi \right) \text{ \ }\Rightarrow \text{ \ }t_{s}\left( \psi \right) =\frac{%
rR_{eff}}{\sqrt{sr}}\sinh \left( \sqrt{s}\psi \right) +C_{s}.
\end{eqnarray}%
Imposing the temporal initial condition on solutions on the above system of
differential equations$,$ yields vanishing constants $C_{i},$ $\forall i\in
\left\{ 1,...,s\right\} .$ The temporal solutions become%
\begin{equation}
t_{i}\left( \psi \right) =\sqrt{\frac{r}{s}}R_{eff}\sinh \left( \sqrt{sr}%
\psi \right) ,\text{ \ \ \ \ }\forall i\in \left\{ 1,...,s\right\} .
\label{Sol s.1}
\end{equation}%
Hyperbolically parameterized by $\psi ,$ the temporal and spatial components
of \ $p^{A}\in M\subset $ $H^{s,r}$ can finally be prescribed as%
\begin{eqnarray}
p_{A,l+1-s}\left( \psi \right)  &=&\sqrt{\frac{r}{s}}R_{eff}\tsum%
\nolimits_{i=1}^{s}\sinh \left( \sqrt{sr}\psi \right) \widehat{t}_{i}
\label{element of H space 1} \\
&&+R_{eff}\tsum\nolimits_{j=s+1}^{k+1}\cosh \left( \sqrt{sr}\psi \right) 
\widehat{x}_{j}.  \notag
\end{eqnarray}%
Similarly, taking the derivative of equation $\left( \ref{element of H space
1}\right) $ yields the perpendicular to $p^{A}$ 
\begin{eqnarray}
\overset{\cdot }{p}_{A,l+1-s}\left( \psi \right)  &=&\sqrt{\frac{r}{s}}\sqrt{%
sr}R_{eff}\tsum\nolimits_{i=1}^{s}\cosh \left( \sqrt{sr}\psi \right) 
\widehat{t}_{i}  \notag \\
&&+\sqrt{sr}R_{eff}\tsum\nolimits_{j=s+1}^{k+1}\sinh \left( \sqrt{sr}\psi
\right) \widehat{x}_{j},
\end{eqnarray}%
where $\widehat{t}_{i}$ and $\widehat{x}_{j}$ are unit vectors pointing in
the time-like and space-like directions, respectively.
\end{proof}

\begin{theorem}
Uniform Parametrization Theorem:

Consider $p_{A,n}\left( \psi \right) \in M\subset H^{s,r}\subset 
%TCIMACRO{\U{2102} }%
%BeginExpansion
\mathbb{C}
%EndExpansion
^{n}$ be a smooth mapping, where $M$ is some open subset of $H^{s,r}$ and $%
\psi \in 
%TCIMACRO{\U{211d} }%
%BeginExpansion
\mathbb{R}
%EndExpansion
$. The pseudo-complex hyperbolic affine space is defined as $%
H^{s,n-s}=\left\{ \left( t_{i},x_{j}\right)
:-\sum_{i=1}^{s}t_{i}^{2}+\sum_{j=s+1}^{n}x_{j}^{2}=R^{2}\right\} ,$ where $%
s $ and $r=n-s$ are the numbers of minuses and pluses dictated by the metric
function. The $p_{A,n}$ and $\overset{\cdot }{p}_{A,n}=\frac{dp_{A,n}}{d\psi 
}$are elements of $H^{s,n-s}$ space and its associated tangent space $%
T_{p_{A,n}}(M),$ respectively, along with the initial condition $\left( \ref%
{initial condition}\right) $. The elements of the respective spaces are
defined by $p_{A,n}\left( \psi \right) =\left( t_{i}\left( \psi \right)
,x_{j}\left( \psi \right) \right) $ and $\overset{\cdot }{p}_{A,n}\left(
\psi \right) =\left( \overset{\cdot }{t}_{i}\left( \psi \right) ,\overset{%
\cdot }{x}_{j}\left( \psi \right) \right) ,$ where $i\in \left\{
1,...,s\right\} $ and $j\in \left\{ s+1,...,n\right\} .$ Then elements of $%
H^{s,n-s}$ and its tangent space $T_{p_{A,n}}(M)$ must be uniformly
parametrized by parametrization parameter $\psi .$
\end{theorem}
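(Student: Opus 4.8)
The plan is to prove the theorem by induction on the number of space-like dimensions, using the Lemma as the engine of the inductive step. The object to be controlled is the full parametrized vector $p_{A,n}\left( \psi \right) =\left( t_{i}\left( \psi \right) ,x_{j}\left( \psi \right) \right) $ together with its tangent $\overset{\cdot }{p}_{A,n}\left( \psi \right) $, and the assertion to be established is that a single real parameter $\psi $ simultaneously parametrizes all $s$ time-like and all $r=n-s$ space-like coordinates while preserving the two defining constraints $p\circledast p=R^{2}$ and $p\circledast \overset{\cdot }{p}=0$. The Lemma already furnishes the parametrized forms at the intermediate stage $l+1-s$, so the theorem is exactly the terminal statement of the induction at $l+1=n$.

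First I would fix the base case at the smallest nontrivial configuration, in which the surface $H^{s,1}$ carries the $s$ time-like directions and a single space-like direction. There the coupled system (\ref{constraint 5}) and (\ref{constraint 6}) reduces to a small, explicitly solvable system whose solution is precisely the $\sinh/\cosh$ expressions of the Lemma specialized to one space-like coordinate. One checks directly that these satisfy the initial condition (\ref{initial condition}) and both constraints, so uniform parametrization holds at the base. For the inductive hypothesis I would assume that $H^{s,l-s}$ is uniformly parametrized by a single $\psi $, meaning every coordinate $t_{i}\left( \psi \right) $ and $x_{j}\left( \psi \right) $ with $j\leq l$ is a function of the same $\psi $ and that the two constraints hold at every value of the parameter.

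For the inductive step I would adjoin the $\left( l+1\right) $-th space-like coordinate and invoke the Lemma, which supplies the parametrized form of $p_{A,l+1-s}\left( \psi \right) $. The crucial structural feature is that the differential constraints couple the derivative of the \emph{new} coordinate to the \emph{sum} $\sum_{i=1}^{s}t_{i}$ of all existing time-like coordinates, and couple each existing time-like derivative to the sum $\sum_{j=s+1}^{l+1}x_{j}$ including the new coordinate. I would argue that this coupling forces the new coordinate to evolve with respect to the same $\psi $: the left-hand side $\overset{\cdot }{x}_{l+1}=dx_{l+1}/d\psi $ is matched to a right-hand side $\sum_{i=1}^{s}t_{i}\left( \psi \right) $ that by hypothesis is a function of $\psi $ alone, so any attempt to parametrize $x_{l+1}$ by an independent parameter $\psi ^{\prime }$ is incompatible unless $\psi ^{\prime }=\psi $ up to an affine reparametrization that is then pinned down by (\ref{initial condition}).

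The hard part will be converting this heuristic "coupling forces equality" statement into a clean uniqueness argument, since one must rule out independent affine reparametrizations $\psi \mapsto a_{i}\psi +b_{i}$ acting separately on distinct coordinates that might preserve the algebraic constraint $p\circledast p=R^{2}$ instant by instant without sharing a common parameter. To close this gap I would differentiate the orthogonality constraint $p\circledast \overset{\cdot }{p}=0$ once more, combine it with $p\circledast p=R^{2}$ to obtain $\overset{\cdot }{p}\circledast \overset{\cdot }{p}+p\circledast \overset{\cdot \cdot }{p}=0$, and then use the second-order equation (\ref{2nd order constrained dfe1.1.2}) together with its time-like analogue to show that every coordinate satisfies the \emph{same} linear second-order ODE with the \emph{same} characteristic frequency $\sqrt{sr}$. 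Because the common frequency forbids distinct scalings $a_{i}$ and the initial condition (\ref{initial condition}) fixes identical data at $\psi =0$ for each coordinate class, the shifts $b_{i}$ are forced to vanish, yielding a single $\psi $. This completes the induction and hence establishes the uniform parametrization for all dimensions up to $n$.
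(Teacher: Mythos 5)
Your scaffolding (induction driven by the Lemma, a low-dimensional base case, verification of the two constraints $p\circledast p=R^{2}$ and $p\circledast \overset{\cdot }{p}=0$) matches the paper's, but your inductive step goes in a genuinely different direction and has a gap. The paper's step enlarges the signature by one \emph{time-like} coordinate $t_{s+1}$ (with the space-like indices shifted by one), and the entire content of that step is an algebraic manipulation of $\overset{\cdot }{x}_{j}=\sum_{i=1}^{s}t_{i}$ designed to show $t_{s+1}^{2}=0$, so that the invariant $R^{2}$ and the orthogonality relation survive the dimension increase; no uniqueness-of-parameter argument is attempted anywhere. You instead adjoin a \emph{space-like} coordinate and try to prove that the coupling of the first-order system forces all coordinates to share a single $\psi $. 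That is arguably a more faithful reading of what ``uniform parametrization'' ought to mean, but it is not what the paper proves, and your version of it does not close.

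The concrete gap is this: your step relies on the claim that $\overset{\cdot }{x}_{l+1}$ is matched to ``$\sum_{i=1}^{s}t_{i}\left( \psi \right) $, which by hypothesis is a function of $\psi $ alone,'' and later on the claim that all coordinates obey the same second-order equation with ``the same characteristic frequency $\sqrt{sr}$.'' But $r$ increases by one at each stage of your induction, so the frequency in equation $\left( \ref{2nd order constrained dfe1.1.2}\right) $ changes from $\sqrt{s\left( l-s\right) }$ to $\sqrt{s\left( l+1-s\right) }$: the functions $t_{i}\left( \psi \right) $ furnished by the inductive hypothesis at level $l$ are \emph{not} the functions appearing in the level-$\left( l+1\right) $ system, because every coordinate, old and new, must be re-solved with the new frequency. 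The inductive hypothesis therefore transfers no information to the enlarged system, and the ``coupling forces a common parameter'' argument becomes circular --- it presupposes that the level-$\left( l+1\right) $ coordinates are already functions of one $\psi $, which is what you set out to prove. To repair it you would either have to re-derive the full solution at level $l+1$ directly from the Lemma (which makes the induction vacuous) or prove a genuine uniqueness statement for the coupled system $\left( \ref{constraint 5}\right) $--$\left( \ref{constraint 6}\right) $ with initial condition $\left( \ref{initial condition}\right) $, e.g.\ by a standard existence--uniqueness theorem for linear ODEs; your sketch gestures at this but does not carry it out. Note also that the paper's base case is $k=1$, i.e.\ $H^{1,1}$ with one time-like and one space-like coordinate, whereas yours is $H^{s,1}$ for arbitrary $s$, so the two inductions are not anchored at the same place.
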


\begin{proof}[Proof of the Main Theorem]
Proof is by induction. Let us redefine the index of the mapping by $n=k+1,$
the element of $H^{s,k+1-s}$ becomes $p_{A,k+1}\left( \psi \right) \in
M\subset H^{s,k+1-s}\subset 
%TCIMACRO{\U{2102} }%
%BeginExpansion
\mathbb{C}
%EndExpansion
^{k+1}.$ For $k=1$, we have $p_{A,1+1-1}\left( \psi \right) =\left(
t_{i}\left( \psi \right) ,x_{j}\left( \psi \right) \right) $ and $\overset{%
\cdot }{p}_{A,2}\left( \psi \right) =\left( \overset{\cdot }{t}_{i}\left(
\psi \right) ,\overset{\cdot }{x}_{j}\left( \psi \right) \right) ,$ where $%
i=1$, $j=2$ and $A=1,2.$ With the given metric function, the invariant
squared of $p_{A,2}\left( \psi \right) $ yields 
\begin{equation}
p^{2}=p_{A,2}\left( \psi \right) \circledast p_{A,2}\left( \psi \right)
=-t_{1}^{2}\left( \psi \right) +x_{2}^{2}\left( \psi \right) =R^{2}.
\label{inner product 1}
\end{equation}%
The implicit differentiation of $p^{2\text{ }}$yields%
\begin{equation}
-t_{1}\overset{\cdot }{t}_{1}+x_{2}\overset{\cdot }{x}_{2}=0.
\label{Ortho 1}
\end{equation}%
Equation $\left( \ref{Ortho 1}\right) $ is equivalent to taking the $H^{1,1}$
inner product of $p_{A,2}$ and $\overset{\cdot }{p}_{A,2},$ thus $%
p_{A,2}\left( \psi \right) \circledast \overset{\cdot }{p}_{A,2}\left( \psi
\right) =-t_{1}\overset{\cdot }{t}_{1}+x_{2}\overset{\cdot }{x}_{2}=0.$
Equation $\left( \ref{Ortho 1}\right) $ is satisfied by using the following
differential equations, $\overset{\cdot }{x}_{2}=t_{1}$ and $\overset{\cdot }%
{t}_{1}=x_{2}.$ Solving these two dfe's for temporal and spatial components,
and imposing the initial condition, $p_{A,2}\left( 0\right) =\left(
t_{1}\left( 0\right) ,x_{2}\left( 0\right) \right) =\left( 0,R_{eff}\right) ,
$ yields $t_{1}=R_{eff}\sinh \psi $ and $x_{2}=R_{eff}\cosh \psi .$ Hence,
the elements $p_{A,2}$ and $\overset{\cdot }{p}_{A,2}$take the following
hyperbolic parametrized forms, $p_{A,2}\left( \psi \right) =\left(
R_{eff}\sinh \psi ,R_{eff}\cosh \psi \right) $ and $\overset{\cdot }{p}%
_{A,2}\left( \psi \right) =\left( R_{eff}\cosh \psi ,R_{eff}\sinh \psi
\right) .$ The elements of $H^{1,1}$ and its associated tangent space $%
T_{p_{A,2}}(M)$ can easily be shown to satisfy equations $\left( \ref{inner
product 1}\right) $ and $\left( \ref{Ortho 1}\right) .$ Hence, true for $k=1.
$

Assume true for $k=l,$ we have the following relations: 
\begin{eqnarray}
p_{A,l+1-s}\left( \psi \right) &=&\left( t_{i},x_{j}\right) , \\
p_{A,l+1-s}\left( \psi \right) \circledast p_{A,l+1-s}\left( \psi \right)
&=&-\tsum\nolimits_{i=1}^{s}t_{i}^{2}+\tsum%
\nolimits_{j=s+1}^{k+1}x_{j}^{2}=R^{2}, \\
p_{A,l+1-s}\left( \psi \right) \circledast \overset{\cdot }{p}%
_{A,l+1-s}\left( \psi \right)
&=&-\tsum\nolimits_{i=1}^{s}\tsum\nolimits_{i^{\prime }=1}^{s}t_{i}\overset{%
\cdot }{t}_{i^{\prime }}\delta _{j,j^{\prime }}  \notag \\
&&+\tsum\nolimits_{j=s+1}^{k+1}\tsum\nolimits_{j^{\prime }=s+1}^{k+1}x_{j}%
\overset{\cdot }{x}_{j^{\prime }}\delta _{j,j^{\prime }}  \notag \\
p_{A,l+1-s}\left( \psi \right) \circledast \overset{\cdot }{p}%
_{A,l+1-s}\left( \psi \right) &=&-\tsum\nolimits_{i=1}^{s}t_{i}\overset{%
\cdot }{t}_{i}+\tsum\nolimits_{j=s+1}^{k+1}x_{j}\overset{\cdot }{x}_{j}=0,
\end{eqnarray}%
where $i\in \left\{ 1,...,s\right\} $ and $j\in \left\{ s+1,...,k+1\right\}
. $ From the lemma, the assumed hyperbolic parametrized forms are given as%
\begin{eqnarray}
p_{A,l+1-s}\left( \psi \right) &=&\sqrt{\frac{r}{s}}R_{eff}\tsum%
\nolimits_{i=1}^{s}\sinh \left( \sqrt{sr}\psi \right) \widehat{t}_{i} \\
&&+R_{eff}\tsum\nolimits_{j=s+1}^{k+1}\cosh \left( \sqrt{sr}\psi \right) 
\widehat{x}_{j},  \notag
\end{eqnarray}%
and%
\begin{eqnarray}
\overset{\cdot }{p}_{A,l+1-s}\left( \psi \right)
&=&rR_{eff}\tsum\nolimits_{i=1}^{s}\cosh \left( \sqrt{sr}\psi \right) 
\widehat{t}_{i} \\
&&+\sqrt{sr}R_{eff}\tsum\nolimits_{j=s+1}^{k+1}\sinh \left( \sqrt{sr}\psi
\right) \widehat{x}_{j},  \notag
\end{eqnarray}%
where $R_{eff}=\frac{1}{\sqrt{r}}R.$ We could also assume the following
systems of differential equations 
\begin{eqnarray}
\overset{\cdot }{x}_{j} &=&\tsum\nolimits_{i=1}^{s}t_{i},\text{ \ and}
\label{1st order dfe 3.1} \\
\overset{\cdot }{t}_{i} &=&\tsum\nolimits_{j=s+1}^{k+1}x_{j}\text{ \ \ }
\label{1st order dfe 3.1.2} \\
&\Longrightarrow &\text{ \ \ }p_{A,l+1-s}\left( \psi \right) \circledast 
\overset{\cdot }{p}_{A,l+1-s}\left( \psi \right)
=-\tsum\nolimits_{i=1}^{s}t_{i}\overset{\cdot }{t}_{i}+\tsum%
\nolimits_{j=s+1}^{k+1}x_{j}\overset{\cdot }{x}_{j}  \notag \\
&=&-\tsum\nolimits_{j=s+1}^{k+1}\tsum\nolimits_{i=1}^{s}t_{i}x_{j}+\tsum%
\nolimits_{j=s+1}^{k+1}\tsum\nolimits_{i=1}^{s}x_{j}t_{i}=0  \notag
\end{eqnarray}

We need to show true for $k=l+1,$%
\begin{equation*}
p_{A,l+2-s}\left( \psi \right) =\left( t_{i},x_{j}\right) ,\text{ \ \ \ \ \ }%
i\in \left\{ 1,...,s,s+1\right\} \text{ and }j\in \left\{
s+2,...,k+1,k+2\right\} .
\end{equation*}%
The $H^{s+1,l+2-s}$ inner product of $p_{A,l+2-s}\left( \psi \right) ,$
yields%
\begin{eqnarray}
p_{A,l+2-s}\left( \psi \right) \circledast p_{A,l+2-s}\left( \psi \right)
&=&-\tsum\nolimits_{i=1}^{s+1}t_{i}^{2}+\tsum%
\nolimits_{j=s+2}^{k+2}x_{j}^{2}=R^{2}  \label{x} \\
&=&-\tsum\nolimits_{i=1}^{s}t_{i}^{2}+\tsum%
\nolimits_{j=s+2}^{k+2}x_{j}^{2}-t_{s+1}^{2}.  \label{l+2 inner product}
\end{eqnarray}%
We shift the index $j^{\prime }=j-1,$%
\begin{equation}
p_{A,l+2-s}\left( \psi \right) \circledast p_{A,l+2-s}\left( \psi \right) =%
\underset{R^{2}}{\underbrace{-\tsum\nolimits_{i=1}^{s}t_{i}^{2}+\tsum%
\nolimits_{j-1=s+1}^{k+1}x_{j}^{2}}}-t_{s+1}^{2}.
\label{l+2 inner product 1}
\end{equation}%
To show $t_{s+1}^{2}=0,$ we add $t_{s+1}$ to both sides of equation $\left( %
\ref{1st order dfe 3.1}\right) $, 
\begin{eqnarray}
\overset{\cdot }{x_{j}}+t_{s+1} &=&\tsum\nolimits_{i=1}^{s}t_{i}+t_{s+1}, 
\notag \\
t_{s+1} &=&\tsum\nolimits_{i=1}^{s+1}t_{i}-\overset{\cdot }{x_{j}}.
\end{eqnarray}%
Squaring equation $\left( \ref{1st order dfe 3.1.1}\right) ,$ yields%
\begin{eqnarray}
t_{s+1}^{2} &=&\left( \tsum\nolimits_{i=1}^{s+1}t_{i}-\overset{\cdot }{x_{j}}%
\right) \cdot \left( \tsum\nolimits_{\delta =1}^{s+1}t_{\delta }-\overset{%
\cdot }{x_{j}}\right)  \notag \\
t_{s+1}^{2} &=&\tsum\nolimits_{i=1}^{s+1}\tsum\nolimits_{\delta
=1}^{s+1}t_{i}t_{\delta }-\tsum\nolimits_{i=1}^{s+1}t_{i}\overset{\cdot }{%
x_{j}}-\overset{\cdot }{x_{j}}\tsum\nolimits_{\delta =1}^{s+1}t_{\delta }+%
\overset{\cdot }{x_{j}}\overset{\cdot }{x_{j}}  \notag \\
t_{s+1}^{2}
&=&t_{s+1}\tsum\nolimits_{i=1}^{s}t_{i}+\tsum\nolimits_{i=1}^{s}t_{i}\overset%
{\cdot }{x}_{j}-2\tsum\nolimits_{i=1}^{s+1}t_{i}\overset{\cdot }{x_{j}}%
+\tsum\nolimits_{i=1}^{s}t_{i}\overset{\cdot }{x_{j}}  \notag \\
t_{s+1}^{2}
&=&t_{s+1}\tsum\nolimits_{i=1}^{s}t_{i}-2t_{_{s+1}}\tsum%
\nolimits_{i=1}^{s}t_{i}=t_{_{s+1}}\tsum\nolimits_{i=1}^{s}t_{i}=0.
\end{eqnarray}%
Thus equation $\left( \ref{l+2 inner product 1}\right) $ yields an invariant
quantity%
\begin{eqnarray}
p_{A,l+2-s}\left( \psi \right) \circledast p_{A,l+2-s}\left( \psi \right) &=&%
\underset{R^{2}}{\underbrace{-\tsum\nolimits_{i=1}^{s}t_{i}^{2}+\tsum%
\nolimits_{j-1=s+1}^{k+1}x_{j}^{2}}}-t_{s+1}^{2}  \notag \\
p_{A,l+2-s}\left( \psi \right) \circledast p_{A,l+2-s}\left( \psi \right)
&=&R^{2}.
\end{eqnarray}%
Taking the derivative of equation $\left( \ref{x}\right) $ and using
equations $\left( \ref{1st order dfe 3.1}\right) $ and $\left( \ref{1st
order dfe 3.1.2}\right) ,$ we have%
\begin{eqnarray}
p\circledast \overset{\cdot }{p} &=&-\tsum\nolimits_{i=1}^{s+1}t_{i}\overset{%
\cdot }{t}_{i}+\tsum\nolimits_{j=s+2}^{k+2}x_{j}\overset{\cdot }{x}_{j} 
\notag \\
&=&-\tsum\nolimits_{i=1}^{s+1}t_{i}\tsum\nolimits_{j=s+2}^{k+2}x_{j}+\tsum%
\nolimits_{j=s+2}^{k+2}x_{j}\tsum\nolimits_{i=1}^{s+1}t_{i}  \notag \\
&=&-\tsum\nolimits_{i=1}^{s}t_{i}\tsum\nolimits_{j=s+2}^{k+2}x_{j}-t_{s+1}%
\tsum\nolimits_{j=s+2}^{k+2}x_{j}  \notag \\
&&+\tsum\nolimits_{j=s+2}^{k+2}x_{j}\tsum\nolimits_{i=1}^{s}t_{i}+\tsum%
\nolimits_{j=s+2}^{k+2}x_{j}t_{s+1}  \notag \\
&=&-\tsum\nolimits_{i=1}^{s}t_{i}\tsum\nolimits_{j=s+2}^{k+2}x_{j}+\tsum%
\nolimits_{j=s+2}^{k+2}x_{j}\tsum\nolimits_{i=1}^{s}t_{i}  \notag \\
&&-t_{s+1}\tsum\nolimits_{j=s+2}^{k+2}x_{j}+\tsum%
\nolimits_{j=s+2}^{k+2}x_{j}t_{s+1}  \notag \\
&=&-\tsum\nolimits_{i=1}^{s}t_{i}\tsum\nolimits_{j^{\prime
}=s+1}^{k+1}x_{j}+\tsum\nolimits_{j^{\prime
}=s+1}^{k+1}x_{j}\tsum\nolimits_{i=1}^{s}t_{i}=0.
\end{eqnarray}%
Adding $t_{s+1}$ to both sides of equation $\left( \ref{1st order dfe 3.1}%
\right) $, we have%
\begin{eqnarray}
\overset{\cdot }{x_{j}}+t_{s+1} &=&\tsum\nolimits_{i=1}^{s}t_{i}+t_{s+1}, 
\notag \\
t_{s+1} &=&\tsum\nolimits_{i=1}^{s+1}t_{i}-\overset{\cdot }{x_{j}}.
\label{1st order dfe 3.1.1}
\end{eqnarray}%
Squaring equation $\left( \ref{1st order dfe 3.1.1}\right) ,$ yields%
\begin{eqnarray}
t_{s+1}^{2} &=&\left( \tsum\nolimits_{i=1}^{s+1}t_{i}-\overset{\cdot }{x_{j}}%
\right) \left( \tsum\nolimits_{\delta =1}^{s+1}t_{\delta }-\overset{\cdot }{%
x_{j}}\right)  \notag \\
t_{s+1}^{2} &=&\tsum\nolimits_{i=1}^{s+1}\tsum\nolimits_{\delta
=1}^{s+1}t_{i}t_{\delta }-\tsum\nolimits_{i=1}^{s+1}t_{i}\overset{\cdot }{%
x_{j}}-\overset{\cdot }{x_{j}}\tsum\nolimits_{\delta =1}^{s+1}t_{\delta }+%
\overset{\cdot }{x_{j}}\overset{\cdot }{x_{j}}  \notag \\
t_{s+1}^{2}
&=&t_{s+1}\tsum\nolimits_{i=1}^{s}t_{i}+\tsum\nolimits_{i=1}^{s}t_{i}\overset%
{\cdot }{x}_{j}-2\tsum\nolimits_{i=1}^{s+1}t_{i}\overset{\cdot }{x_{j}}%
+\tsum\nolimits_{i=1}^{s}t_{i}\overset{\cdot }{x_{j}}  \notag \\
t_{s+1}^{2}
&=&t_{s+1}\tsum\nolimits_{i=1}^{s}t_{i}-2t_{_{s+1}}\tsum%
\nolimits_{i=1}^{s}t_{i}=t_{_{s+1}}\tsum\nolimits_{i=1}^{s}t_{i}=0.
\end{eqnarray}%
Thus equation $\left( \ref{l+2 inner product 1}\right) $ yields an invariant
quantity%
\begin{eqnarray}
p_{A,l+2-s}\left( \psi \right) \circledast p_{A,l+2-s}\left( \psi \right) &=&%
\underset{R^{2}}{\underbrace{-\tsum\nolimits_{i=1}^{s}t_{i}^{2}+\tsum%
\nolimits_{j-1=s+1}^{k+1}x_{j}^{2}}}-t_{s+1}^{2}  \notag \\
p_{A,l+2-s}\left( \psi \right) \circledast p_{A,l+2-s}\left( \psi \right)
&=&R^{2}.
\end{eqnarray}%
Therefore true for $l=k+1.$\pagebreak
\end{proof}

\end{document}